\title{$TFU$ extensions in LCA groups}
\author{Aliakbar Alijani}
\date{}
\theoremstyle{plain}
 \newtheorem{theorem}{Theorem}[section]
 \newtheorem{lemma}[theorem]{Lemma}
\theoremstyle{definition}
\newtheorem{definition}[subsection]{Definition}
 \newtheorem{example}[theorem]{Example}
 \newtheorem{remark}[theorem]{Remark}
 \newtheorem{corollary}[theorem]{Corollary}
\begin{document}

\maketitle

\begin{abstract}
Let $\ell$ be the category of all locally compact abelian (LCA) groups. Let $G\in\ell$ and $H\subseteq G$. The first Ulm subgroup of $G$ is denoted by $G^{(1)}$ and the closure of $H$ by $\overline{H}$. A proper short exact sequence $0\to A\stackrel{\phi}{\to} B\stackrel{\psi}{\to} C\to 0$ in $\ell$ is said to be a $TFU$ extension if $0\to \overline{A^{(1)}}\stackrel{\overline{\phi}}{\to} \overline{B^{(1)}}\stackrel{\overline{\psi}}{\to} \overline{C^{(1)}}\to 0$ is a proper short exact sequence where $\overline{\phi}=\phi\mid_{\overline{A^{(1)}}}$ and $\overline{\psi}=\psi\mid_{\overline{B^{(1)}}}$. We introduce some results on $TFU$ extensions. Also, we establish conditions under which the $TFU$ extensions split.
\end{abstract}

\section{Introduction}

%------
\newcommand{\stk}{\stackrel}
Throughout, all groups are Hausdorff topological abelian groups and will be written additively. Let $\ell$ denote the category of locally compact abelian (LCA) groups with continuous homomorphisms as morphisms. A morphism is called proper if it is open onto its image, and a short exact sequence $0\to A\stackrel{\phi}{\to} B\stackrel{\psi}{\to}C\to 0$ in $\ell$ is said to be proper exact if $\phi$ and $\psi$ are proper morphisms. In this case the sequence is called an extension of $A$ by $C$ ( in $\ell $ ). Following \cite{FG1}, we let $Ext(C,A)$ denote the group of extensions of $A$ by $C$. The first Ulm subgroup of a group $G$ is denoted by $G^{(1)}$. In this paper, we introduce a new concept on extensions. An extension $0\to A \stackrel{\phi}{\to} B \stackrel{\psi}{\to} C\to 0$ in $\ell$ is called a $TFU$ extension if $0\to \overline{A^{(1)}}\stackrel{\overline{\phi}}{\to} \overline{B^{(1)}}\stackrel{\overline{\psi}}{\to} \overline{C^{(1)}}\to 0$ be an extension where $\overline{\phi}=\phi\mid_{\overline{A^{(1)}}}$ and $\overline{\psi}=\psi\mid_{\overline{B^{(1)}}}$. In Section 1, we show that every extension of a divisible group by an arbitrary LCA group is a $TFU$ extension (Corollary \ref{4}). A subgroup $H$ of a group $G$ is said to be pure if $H\bigcap nG=nH$ for every positive integer $n$ \cite{F}. An extension $0\to A \stackrel{\phi}{\to} B \stackrel{\psi}{\to} C\to 0$ is said to be a pure extension if $\phi(A)$ is pure in $B$ \cite{Fu1}. We show that every pure extension of a group by a compact totally disconnected group is a $TFU$ extension (Corollary \ref{6}). In Section 2, we establish some results on splitting of $TFU$ extensions (see Lemma \ref{10},\ref{11},\ref{12},\ref{16},\ref{13},).

The additive topological group of real numbers is denoted by $ \mathbb{R}$, $ \mathbb{Q}$ is the group of rationals with the discrete topology and $ \mathbb{Z}$, the group of integers. For any group $G$, $G_{0}$ is the identity component of $G$ and $tG$, the maximal torsion subgroup of $G$. For groups $G$ and $H$, $Hom(G,H)$ is the group of all continuous homomorphisms from $G$ to $H$, endowed with the compact-open topology. The dual group of $G$ is $\hat{G}=Hom(G, \mathbb{R}/ \mathbb{Z})$ and $(\hat{G},S)$ denoted the annihilator of $S\subseteq G$ in $\hat{G}$. For more on locally compact abelian groups, see \cite{HR}.

\section{$TFU$ extensions}
 Let $A,C\in \pounds$. In this section, we define the concept of a $TFU$ extensions of $A$ by $C$.

 \begin{definition}
An extension $0\to A \stackrel{\phi}{\to} B \stackrel{\psi}{\to} C\to 0$ in $\pounds$ is called a $TFU$ extension if $0\to \overline{A^{(1)}}\stackrel{\overline{\phi}}{\to} \overline{B^{(1)}}\stackrel{\overline{\psi}}{\to} \overline{C^{(1)}}\to 0$ is an extension.
\end{definition}

\begin{remark}\label{1}
Let $G_{1}$ and $G_{2}$ be two groups. An easy calculation shows that $(G_{1}\bigoplus G_{2})^{(1)}=G_{1}^{(1)}\bigoplus G_{2}^{(1)}$. So, $\overline{(G_{1}\bigoplus G_{2})^{(1)}}=\overline{G_{1}^{(1)}}\bigoplus \overline{G_{2}^{(1)}}$.
\end{remark}

\begin{lemma}\label{2}
For groups $A,C\in \ell$, the trivial extension $0\to A\to A\bigoplus C\to C\to 0$ is a $TFU$ extension.
\end{lemma}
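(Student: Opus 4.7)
The plan is to reduce the statement directly to Remark \ref{1}. In the trivial extension $0\to A\stk{\iota}{\to} A\oplus C\stk{\pi}{\to} C\to 0$, the map $\iota$ is the canonical injection $a\mapsto(a,0)$ and $\pi$ is the canonical projection $(a,c)\mapsto c$; both are proper morphisms in $\ell$.

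First I would apply Remark \ref{1} to identify
\[
\overline{(A\oplus C)^{(1)}}=\overline{A^{(1)}}\oplus\overline{C^{(1)}}.
\]
Then I would check that under this identification the restricted maps $\overline{\iota}=\iota\mid_{\overline{A^{(1)}}}$ and $\overline{\pi}=\pi\mid_{\overline{(A\oplus C)^{(1)}}}$ are exactly the canonical injection $\overline{A^{(1)}}\hookrightarrow\overline{A^{(1)}}\oplus\overline{C^{(1)}}$ and the canonical projection $\overline{A^{(1)}}\oplus\overline{C^{(1)}}\twoheadrightarrow\overline{C^{(1)}}$. This is immediate from the definitions, since $\iota(\overline{A^{(1)}})=\overline{A^{(1)}}\oplus 0$ and $\pi$ sends $\overline{A^{(1)}}\oplus\overline{C^{(1)}}$ onto $\overline{C^{(1)}}$ with kernel $\overline{A^{(1)}}\oplus 0$.

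Consequently the sequence
\[
0\to \overline{A^{(1)}}\stk{\overline{\iota}}{\to} \overline{A^{(1)}}\oplus\overline{C^{(1)}}\stk{\overline{\pi}}{\to} \overline{C^{(1)}}\to 0
\]
is itself a trivial (direct sum) extension in $\ell$, hence a proper short exact sequence. Therefore the original extension is a $TFU$ extension. There is no real obstacle here; the argument is essentially a direct invocation of Remark \ref{1} together with the observation that restriction of the canonical injection and projection to a direct summand of the form $\overline{A^{(1)}}\oplus\overline{C^{(1)}}$ gives back the canonical injection and projection for that smaller direct sum.
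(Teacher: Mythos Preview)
Your proof is correct and follows exactly the same approach as the paper, which simply states ``It is clear by Remark \ref{1}.'' You have merely spelled out the details that the paper leaves implicit.
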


\begin{proof}
It is clear by Remark \ref{1}.
\end{proof}

\begin{lemma}\label{3}
Let $G\in \ell$ and $H$ be a closed, divisible subgroup of $G$. Then, $\overline{(G/H)^{(1)}}=\overline{G^{(1)}}/H$.
\end{lemma}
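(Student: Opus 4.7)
The plan is to reduce the claim in two stages: first compute the (un-closured) first Ulm subgroup $(G/H)^{(1)}$ in terms of $G^{(1)}$, and then commute the closure operation with the quotient by $H$.

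For the first stage, I would exploit divisibility of $H$. Since $H$ is divisible, for every positive integer $n$ we have $H=nH\subseteq nG$, so that $nG+H=nG$ and consequently $n(G/H)=(nG+H)/H=nG/H$. Intersecting over $n$, I claim $(G/H)^{(1)}=G^{(1)}/H$: the inclusion $\supseteq$ is immediate, and for $\subseteq$, if $x+H\in\bigcap_n nG/H$ then for each $n$ we can write $x+H=x_n+H$ with $x_n\in nG$, whence $x-x_n\in H\subseteq nG$ and $x\in nG$; taking the intersection gives $x\in G^{(1)}$.

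For the second stage, I need the general fact that for a closed subgroup $H$ of a topological group $G$ and any intermediate subgroup $H\subseteq K\subseteq G$, one has $\overline{K/H}=\overline{K}/H$ inside $G/H$. Let $\pi\colon G\to G/H$ be the canonical projection, which is continuous and open. On the one hand, $\pi^{-1}(\overline{K}/H)=\overline{K}$ is closed, so $\overline{K}/H$ is closed in $G/H$ and hence contains $\overline{K/H}$. On the other hand, $\pi^{-1}(\overline{\pi(K)})$ is a closed set containing $K$, so it contains $\overline{K}$; applying $\pi$ yields $\overline{K}/H\subseteq \overline{K/H}$.

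Combining the two stages with $K=G^{(1)}$ (which contains $H$, again by divisibility) gives
\[
\overline{(G/H)^{(1)}}=\overline{G^{(1)}/H}=\overline{G^{(1)}}/H,
\]
as required. The only potentially delicate point is justifying $n(G/H)=nG/H$ and the compatibility of the infinite intersection with the quotient; both rest squarely on the inclusion $H\subseteq nG$ supplied by divisibility, so the argument is essentially algebraic with a short topological postscript to handle the closures.
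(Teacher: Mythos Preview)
Your argument is correct. Both inclusions in Stage~1 are valid (the key point being $H\subseteq nG$, so that membership of $x+H$ in $nG/H$ forces $x\in nG$), and Stage~2 is a standard fact about quotient maps by closed subgroups; the application with $K=G^{(1)}$ is legitimate since $H\subseteq G^{(1)}$.

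Your route differs in organisation from the paper's. The paper does not isolate the intermediate equality $(G/H)^{(1)}=G^{(1)}/H$; instead it proves one inclusion via continuity of $\pi$ (namely $\pi(\overline{G^{(1)}})\subseteq\overline{\pi(G^{(1)})}\subseteq\overline{(G/H)^{(1)}}$) and handles the reverse inclusion by a direct neighborhood chase: given $x+H\in\overline{(G/H)^{(1)}}$ and an open $V\ni x$, it picks $y+H\in(V+H)/H\cap(G/H)^{(1)}$, adjusts to $z\in V$ with $z+H=y+H$, and then uses divisibility twice (once to write $z-y\in nH$, once to write $y\in nG$) to conclude $z\in nG$ for every $n$, hence $z\in V\cap G^{(1)}$. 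Your two-stage factorisation is cleaner and yields two reusable lemmas (the algebraic identity for Ulm subgroups modulo a divisible subgroup, and the closure-commutes-with-quotient fact), at the cost of a slightly more abstract presentation; the paper's version keeps everything in one concrete computation but obscures that the closure step has nothing to do with Ulm subgroups.
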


\begin{proof}

Let $\pi:G\to G/H$ be the natural mapping. Then, $\pi(\overline{G^{1}})\subseteq \overline{\pi(G^{(1)})}=\overline{(G/H)^{(1)}}$. But, $\pi(\overline{G^{(1)}})=(\overline{G^{(1)}})/H$. Hence, $\overline{G^{(1)}}/H\subseteq \overline{(G/H)^{(1)}}$. Now, suppose that $x+H\in \overline{(G/H)^{(1)}}$. We show that $x\in \overline{G^{(1)}}$. Let $V$ be a neighborhood of $G$ containing $x$ and $n$, an arbitrary positive integer. Then, $y+H\in (V+H)/H \bigcap (G/H)^{(1)}\neq \phi$ for some $y\in G$. From $y+H\in (V+H)/H$, deduce that $y+H=z+H$ for some $z\in V$. Since $H$ is divisible, so $z=y+nh$ for some $h\in H$. On the other hand, $y+H\in (G/H)^{(1)}$. So, $y+H=ng+H$ for some $g\in G$. Hence, $y=ng+nh_{1}$ for some $h_{1}\in H$. Therefore, $z\in nG$. This shows that $z\in V\bigcap G^{(1)}$. Hence, $x\in \overline{G^{(1)}}$.
\end{proof}

\begin{corollary}\label{4}
Every extension of a divisible group by an arbitrary LCA group is a $TFU$ extension.
\end{corollary}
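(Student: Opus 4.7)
The plan is to bootstrap Lemma \ref{3}. An extension of a divisible group $A$ by $C$ presents $\phi(A)$ as a closed divisible subgroup of $B$, so divisibility trivialises the source term of the Ulm sequence and, at the same time, feeds Lemma \ref{3} exactly what it needs to pin down the quotient term. Almost every algebraic fact therefore comes for free; the only real work is verifying that the restricted maps $\overline{\phi},\overline{\psi}$ remain proper.

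First I would simplify $\overline{A^{(1)}}$ and locate it inside $\overline{B^{(1)}}$. Divisibility of $A$ gives $nA=A$ for every $n$, hence $A^{(1)}=A$ and $\overline{A^{(1)}}=A$. The image $\phi(A)$, being (algebraically) divisible and closed in $B$, satisfies $n\phi(A)=\phi(A)\subseteq nB$ for all $n$, so $\phi(A)\subseteq B^{(1)}\subseteq\overline{B^{(1)}}$. In particular $\overline{\phi}$ is just the inclusion of a closed subgroup into $\overline{B^{(1)}}$, which is automatically proper. Next, applying Lemma \ref{3} with $G=B$ and $H=\phi(A)$ yields $\overline{B^{(1)}}/\phi(A)=\overline{(B/\phi(A))^{(1)}}$; transporting along the topological isomorphism $B/\phi(A)\cong C$ induced by $\psi$ identifies the right-hand side with $\overline{C^{(1)}}$ and gives $\psi(\overline{B^{(1)}})=\overline{C^{(1)}}$ with kernel $\phi(A)=\overline{A^{(1)}}$. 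Algebraic exactness of the Ulm sequence is then immediate.

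The remaining and most delicate step is showing that $\overline{\psi}:\overline{B^{(1)}}\to\overline{C^{(1)}}$ is proper, i.e.\ open onto its image. Here I would exploit that $\psi:B\to C$ is already a topological quotient map and that $\overline{B^{(1)}}$ is a closed, $\psi$-saturated subgroup of $B$ because $\ker\psi=\phi(A)\subseteq\overline{B^{(1)}}$. For any open $W\subseteq B$ one has $\psi(W\cap\overline{B^{(1)}})=\psi(W)\cap\overline{C^{(1)}}$, precisely thanks to this saturation, and $\psi(W)$ is open in $C$ since $\psi$ is a quotient map; this is the standard argument that the restriction of a quotient homomorphism to a saturated closed subgroup is itself a quotient map onto its (closed) image. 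Once this topological point is secured, both $\overline{\phi}$ and $\overline{\psi}$ are proper and the corollary follows. The main obstacle is exactly this verification of properness: Lemma \ref{3} hands us all the algebra, and what remains is the saturation observation $\phi(A)\subseteq\overline{B^{(1)}}$ combined with the quotient-map calculus.
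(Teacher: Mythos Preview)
Your proof is correct and follows the same approach as the paper, whose proof is the one-liner ``It is clear by Lemma \ref{3}.'' You have simply supplied the details the paper suppresses---identifying $\overline{A^{(1)}}=A$, applying Lemma \ref{3} to $G=B$ and $H=\phi(A)$, and checking properness of $\overline{\psi}$ via the saturation argument---all of which are sound.
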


\begin{proof}
It is clear by Lemma \ref{3}.
\end{proof}

\begin{lemma}\label{5}
Let $G\in \ell$ and $H$ a closed, pure subgroup of $G$ such that $(G/H)^{(1)}=0$. Then, $G^{(1)}=H^{(1)}$.
\end{lemma}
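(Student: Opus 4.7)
The plan is to read $G^{(1)}$ as the algebraic first Ulm subgroup $\bigcap_{n\geq 1} nG$ (consistent with the earlier use of $\overline{G^{(1)}}$ in Lemma~\ref{3}, which would be redundant if $G^{(1)}$ were already closed) and to prove the two inclusions separately, with purity doing the essential work.

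The inclusion $H^{(1)}\subseteq G^{(1)}$ is immediate: for each $n$, $nH\subseteq nG$, so intersecting over $n$ gives $H^{(1)}\subseteq G^{(1)}$. No hypothesis beyond $H\leq G$ is needed here.

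For the reverse inclusion $G^{(1)}\subseteq H^{(1)}$, I would proceed in two steps. First, I would use the hypothesis on the quotient to force $G^{(1)}\subseteq H$. Let $\pi\colon G\to G/H$ be the canonical map; for each $n$, $\pi(nG)=n(G/H)$. Therefore, if $x\in G^{(1)}=\bigcap_n nG$, then $\pi(x)\in\bigcap_n n(G/H)=(G/H)^{(1)}=0$, so $x\in H$. Second, I would invoke purity: for every $n$, $H\cap nG=nH$ by definition of purity. Since the $x$ from the previous step lies in $H\cap nG$ for all $n$, we get $x\in nH$ for all $n$, i.e.\ $x\in H^{(1)}$.

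There is essentially no obstacle: the argument does not use the topology on $G$ or $H$ (closedness of $H$ is only what makes the hypotheses well-posed, ensuring $G/H$ is a Hausdorff LCA group and $H\in\ell$ so that $H^{(1)}$ and $(G/H)^{(1)}$ are meaningful). The only subtlety worth flagging is the reading of the notation $G^{(1)}$: under the alternative convention $G^{(1)}=\bigcap_n\overline{nG}$, one would additionally need the fact that in an LCA group a closed pure subgroup $H$ satisfies $H\cap\overline{nG}=\overline{nH}$, which follows from the openness of $\pi$ combined with the algebraic purity relation; but with the algebraic definition the proof reduces to the two short steps above.
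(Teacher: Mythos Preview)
Your proof is correct and follows essentially the same route as the paper: first use $(G/H)^{(1)}=0$ to force $G^{(1)}\subseteq H$, then invoke purity $H\cap nG=nH$ to conclude $G^{(1)}\subseteq H^{(1)}$. The paper's version is terser (it omits the trivial inclusion $H^{(1)}\subseteq G^{(1)}$ and compresses the purity step into a single sentence), but the argument is the same.
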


\begin{proof}
Let $x\in G^{(1)}$ and $n$ an arbitrary positive integer. Then, $x=ng$ for some $g\in G$. So, $x+H\in n(G/H)$. Hence, $x+H\in (G/H)^{(1)}=0$. This shows that $G^{(1)}\subseteq H$. Since $H$ is pure, $G^{(1)}=H^{(1)}$.
\end{proof}

\begin{corollary}\label{6}
Every pure extension of a group by a compact totally disconnected group is a $TFU$ extension.
\end{corollary}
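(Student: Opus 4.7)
The plan is to reduce the corollary to Lemma \ref{5} by first establishing that every compact totally disconnected group $C\in\ell$ satisfies $C^{(1)}=0$. Once this is known, the pure extension hypothesis will let me identify $\overline{B^{(1)}}$ with $\overline{\phi(A^{(1)})}$, and the $TFU$ condition will follow immediately.

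To show $C^{(1)}=0$, I would invoke Pontryagin duality. Since $C$ is compact and totally disconnected, $\hat{C}$ is discrete and torsion. For each positive integer $n$, $nC$ is the continuous image of the compact group $C$ under multiplication by $n$, hence closed, and its annihilator in $\hat{C}$ is $\hat{C}[n]$. Consequently, the annihilator of $C^{(1)}=\bigcap_{n}nC$ in $\hat{C}$ equals the closure of $\bigcup_{n}\hat{C}[n]=t\hat{C}=\hat{C}$, so $C^{(1)}=0$ and $\overline{C^{(1)}}=0$.

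Given a pure extension $0\to A\stk{\phi}{\to} B\stk{\psi}{\to}C\to 0$ with $C$ compact totally disconnected, identify $A$ with $\phi(A)$. Then $\phi(A)$ is a closed pure subgroup of $B$ with quotient topologically isomorphic to $C$, so $(B/\phi(A))^{(1)}=0$. Lemma \ref{5} gives $B^{(1)}=\phi(A)^{(1)}$. Because $\phi$ is a proper monomorphism, it is a topological isomorphism onto its closed image, hence $\phi(A^{(1)})=\phi(A)^{(1)}$ and, taking closures in $B$, $\overline{B^{(1)}}=\phi(\overline{A^{(1)}})$. Thus $\overline{\phi}:\overline{A^{(1)}}\to\overline{B^{(1)}}$ is a proper topological isomorphism, and since $\overline{C^{(1)}}=0$ the sequence $0\to\overline{A^{(1)}}\stk{\overline{\phi}}{\to}\overline{B^{(1)}}\stk{\overline{\psi}}{\to}\overline{C^{(1)}}\to 0$ is proper exact.

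The main obstacle is the duality computation in step two; everything else is routine bookkeeping with the proper embedding $\phi$ and Lemma \ref{5}. If one prefers not to invoke duality explicitly, the same conclusion $C^{(1)}=0$ for compact totally disconnected $C$ can be extracted from the structure theory of such groups (projective limits of finite abelian groups), but the dual-side argument is the quickest and fits directly within the LCA framework used throughout the paper.
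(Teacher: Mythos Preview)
Your proof is correct and follows essentially the same route as the paper: reduce to Lemma~\ref{5} and verify $C^{(1)}=0$ for compact totally disconnected $C$ via Pontryagin duality. The paper compresses your annihilator computation into a citation of Hewitt--Ross (24.22 and 24.26), and leaves implicit the closure bookkeeping with $\phi$ that you spell out, but the underlying argument is the same.
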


\begin{proof}
Let $G$ be a compact totally disconnected group. By Lemma \ref{5}, it is sufficient to show that $G^{(1)}=0$. By Theorem 24.26 of \cite{HR} , $\hat{G}$ is a torsion group. So, $G^{(1)}\subseteq(G,\hat{G})=0$ (see Theorem 24.22 of \cite {HR}).
\end{proof}

\begin{corollary}\label{14}
 Let $G\in \ell$ such that $nG$ is closed in $G$ for all positive integers $n$. Then, every pure extension of a group by $G/G^{(1)}$ is a $TFU$ extension.
\end{corollary}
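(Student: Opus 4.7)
The plan is to reduce the claim to Lemma \ref{5}. If I can show $(G/G^{(1)})^{(1)} = 0$, then for any pure extension $0 \to A \stackrel{\phi}{\to} B \stackrel{\psi}{\to} G/G^{(1)} \to 0$, applying Lemma \ref{5} to the closed pure subgroup $H = \phi(A) \subseteq B$ will give $B^{(1)} = \phi(A)^{(1)} = \phi(A^{(1)})$. Since $\phi$ is a topological isomorphism onto its closed image, taking closures in $B$ yields $\overline{B^{(1)}} = \phi(\overline{A^{(1)}})$, so $\overline{\phi}$ is a topological isomorphism onto $\overline{B^{(1)}}$. Because $\overline{(G/G^{(1)})^{(1)}} = 0$, the induced sequence $0 \to \overline{A^{(1)}} \to \overline{B^{(1)}} \to 0 \to 0$ is then trivially proper exact, so the original extension is $TFU$.

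The substantive step is therefore to verify $(G/G^{(1)})^{(1)} = 0$. The hypothesis that $nG$ is closed for every positive integer $n$ ensures $G^{(1)} = \bigcap_{n} nG$ is closed, so $G/G^{(1)} \in \ell$. For any $x + G^{(1)} \in (G/G^{(1)})^{(1)}$ and any fixed $n$, I would write $x + G^{(1)} = ny + G^{(1)}$ for some $y \in G$, which gives $x - ny \in G^{(1)} \subseteq nG$ and hence $x \in nG$. Intersecting over all $n$ forces $x \in G^{(1)}$, so the class $x + G^{(1)}$ is trivial.

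The step I expect to be the main obstacle is precisely this identification $(G/G^{(1)})^{(1)} = 0$. It is short, but it is the only place where the closedness of $nG$ is actually used, and it is used twice: once to make $G^{(1)}$ closed so that $G/G^{(1)}$ is an LCA group in the first place, and once to supply the algebraic inclusion $G^{(1)} \subseteq nG$ needed in the argument. Everything else is a mechanical application of Lemma \ref{5} together with the standard fact that proper morphisms in $\ell$ are topological embeddings onto their closed images, which lets me pass closures through $\phi$.
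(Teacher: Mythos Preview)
Your proof is correct and follows essentially the same route as the paper: both arguments reduce to showing $(G/G^{(1)})^{(1)}=0$ and then invoke Lemma~\ref{5} (the paper does this implicitly, mirroring the proof of Corollary~\ref{6}, while you spell out the consequences for $\overline{B^{(1)}}$ and the induced sequence). One small remark on your commentary: the inclusion $G^{(1)}\subseteq nG$ is immediate from the definition $G^{(1)}=\bigcap_m mG$ and does not require closedness of $nG$; the hypothesis is only needed to guarantee that $G^{(1)}$ is closed, so that $G/G^{(1)}\in\ell$.
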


\begin{proof}
 Let $x+G^{(1)}\in (G/G^{(1)})^{(1)}$ and $n$ be an arbitrary positive integer. Then, $x+G^{(1)}=ng+G^{(1)}$ for some $g\in G$. This shows that $x\in G^{(1)}$. Hence, $(G/G^{(1)})^{(1)}=0$.
\end{proof}

The dual of an extension $E:0\to A\to B\to C\to 0$ is defined by $\hat{E}:0\to \hat{C}\to \hat{B}\to \hat{A}$. The following example shows that the dual of a $TFU$ extension need not to be a $TFU$ extension.

\begin{example}
Consider the extension $E:0\to \mathbb {Z} \stackrel{\times 2}{\to} \mathbb {Z}\to \mathbb {Z}_{2}\to 0$. Clearly, $E$ is a $TFU$ extension. But, $\hat{E}:0\to \mathbb Z_{2}\to \mathbb R/\mathbb Z \stackrel{\times 2}{\to} \mathbb R/\mathbb Z \to 0$ is not a $TFU$ extension.
\end{example}

Recall that two extensions $0 \to A \stackrel{\phi_{1}}{\to} B \stackrel{\psi_{1}}{\to} C \to 0$ and $0 \to A \stackrel{\phi_{2}}{\to} X \stackrel{\psi_{2}}{\to} C \to 0$ are said to be equivalent if there is a topological isomorphism $\beta:B\to X$ such that the following diagram
\[
\xymatrix{
0 \ar[r] & A \ar^{\phi_{1}}[r] \ar^{1_{A}}[d] & B \ar^{\psi_{1}}[r] \ar^{\beta}[d] & C \ar[r] \ar^{1_{C}}[d]
& 0 \\
0 \ar[r] & A \ar^{\phi_{2}}[r] &   X \ar^{\psi_{2}}[r] & C \ar[r] & 0
}
\]
is commutative.

\begin{lemma}\label{7}
An extension equivalent to a $TFU$ extension is a $TFU$ extension.
\end{lemma}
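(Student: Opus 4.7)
The plan is to transfer exactness and properness of the Ulm sequence across the topological isomorphism $\beta$. Let $E_1:0\to A\stk{\phi_1}{\to}B\stk{\psi_1}{\to}C\to0$ be a $TFU$ extension and let $E_2:0\to A\stk{\phi_2}{\to}X\stk{\psi_2}{\to}C\to0$ be equivalent to $E_1$ via the topological isomorphism $\beta:B\to X$. The goal is to produce the commutative ladder
\[
\xymatrix{
0 \ar[r] & \overline{A^{(1)}} \ar^{\overline{\phi_{1}}}[r] \ar^{1}[d] & \overline{B^{(1)}} \ar^{\overline{\psi_{1}}}[r] \ar^{\beta'}[d] & \overline{C^{(1)}} \ar[r] \ar^{1}[d] & 0 \\
0 \ar[r] & \overline{A^{(1)}} \ar^{\overline{\phi_{2}}}[r] & \overline{X^{(1)}} \ar^{\overline{\psi_{2}}}[r] & \overline{C^{(1)}} \ar[r] & 0
}
\]
in which $\beta'$ is a topological isomorphism, and then conclude that the bottom row is an extension because the top row is.

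First I would observe that any topological isomorphism $f:G\to G'$ of LCA groups satisfies $f(nG)=nG'$ for every positive integer $n$, hence $f(G^{(1)})=(G')^{(1)}$ and, since $f$ is a homeomorphism, $f(\overline{G^{(1)}})=\overline{(G')^{(1)}}$. Applying this to $\beta$, its restriction $\beta':=\beta\mid_{\overline{B^{(1)}}}$ is a topological isomorphism onto $\overline{X^{(1)}}$.

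Next I would verify commutativity of the restricted diagram. The upper square commutes because $\phi_i(\overline{A^{(1)}})\subseteq \overline{B^{(1)}}$ respectively $\overline{X^{(1)}}$ (continuous homomorphisms send $n$-divisible elements to $n$-divisible elements, and Ulm subgroups to Ulm subgroups, then take closures), and $\beta\circ\phi_1=\phi_2$ on all of $A$ by the equivalence of $E_1$ and $E_2$. Similarly the lower square commutes because $\psi_2\circ\beta=\psi_1$, and both sides send $\overline{B^{(1)}}$ into $\overline{C^{(1)}}$. Thus $\overline{\phi_2}\circ 1_{\overline{A^{(1)}}}=\beta'\circ\overline{\phi_1}$ and $1_{\overline{C^{(1)}}}\circ\overline{\psi_1}=\overline{\psi_2}\circ\beta'$.

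Finally, since $E_1$ is $TFU$, the top row is a proper short exact sequence. Because $\beta'$ is a topological isomorphism and the outer vertical maps are the identity, it follows directly that $\overline{\phi_2}=\beta'\circ\overline{\phi_1}$ is injective and proper, $\overline{\psi_2}=\overline{\psi_1}\circ(\beta')^{-1}$ is surjective and proper, and $\ker(\overline{\psi_2})=\beta'(\ker\overline{\psi_1})=\beta'(\overline{\phi_1}(\overline{A^{(1)}}))=\overline{\phi_2}(\overline{A^{(1)}})$. Hence the bottom row is an extension, i.e.\ $E_2$ is a $TFU$ extension. There is no real obstacle here; the only point requiring care is the stability of closed Ulm subgroups under topological isomorphism, which reduces to the elementary identity $f(nG)=nG'$.
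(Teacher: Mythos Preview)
Your proof is correct and follows essentially the same approach as the paper: both arguments hinge on the fact that $\beta$ restricts to a topological isomorphism $\overline{B^{(1)}}\to\overline{X^{(1)}}$, yielding $\overline{\phi_2}=\beta'\circ\overline{\phi_1}$ and $\overline{\psi_2}=\overline{\psi_1}\circ(\beta')^{-1}$, from which exactness and properness of the second Ulm sequence follow. The paper verifies exactness at $\overline{X^{(1)}}$ by an explicit element chase whereas you invoke the isomorphism transfer directly, but this is a cosmetic difference only.
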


\begin{proof}
Let
\begin{equation*}
E_1: 0 \to A \stackrel{\phi_1}{\to} B\stackrel{\psi_1}{\to} C \to 0
\end{equation*}
\begin{equation*}
E_2: 0 \to A \stackrel{\phi_2}{\to} X \stackrel{\psi_2}{\to} C \to 0
\end{equation*}
be two equivalent extensions such that $E_{1}$ is a $TFU$ extension. Then, there is a topological isomorphism $\beta:B\to X$ such that $\beta\phi_1=\phi_2$ and $\psi_2\beta=\psi_1$.
Since $\beta(\overline{B^{(1)}})=\overline{X^{(1)}}$, so
\begin{equation*}
\psi_{2}(\overline{X^{(1)}})=\psi_{2}\beta(\overline{B^{(1)}})=\psi_{1}(\overline{B^{(1)}})=\overline{C^{(1)}}
\end{equation*}
Hence, $\overline{\psi_{2}}:\overline{X^{(1)}}\to \overline{C^{(1)}}$ is surjective. Since $\phi_{2}=\beta\phi_{1}$ and $E_{1}$ is a $TFU$ extension, so $$\psi_{2}\phi_{2}(\overline{A^{(1)}})=\psi_{2}(\beta\phi_{1}(\overline{A^{(1)}}))=\psi_{1}\phi_{1}(\overline{A^{(1)}})=0$$ Hence, $Im \overline{\phi_{2}}\subseteq Ker \overline{\psi_{2}}$. Now, we show that $Ker \overline{\psi_{2}}\subseteq Im \overline{\phi_{2}}$. Let $x\in \overline{X^{(1)}}$ and $\psi_{2}(x)=0$. Then, there exists $b\in \overline{B^{(1)}}$ such that $x=\beta(b)$. Since $\psi_{1}(b)=\psi_{2}\beta(b)=\psi_{2}(x)=0$ and $E_{1}$ is a $TFU$ extension, so $b=\phi_{1}(a)$ for some $a\in \overline{A^{(1)}}$. Hence,
\begin{equation*}
\phi_{2}(a)=\beta\phi_{1}(a)=\beta(b)=x
\end{equation*}
and $0 \to \overline{A^{(1)}} \stackrel{\overline{\phi_2}}{\to} \overline{X^{(1)}} \stackrel{\overline{\psi_2}}{\to} \overline{C^{(1)}}\to 0 $ is an exact sequence. Since
\begin{equation*}
\overline{\psi_{2}}=\overline{\psi_{1}}(\overline{\beta})^{-1},\overline{\phi_{2}}=\overline{\beta}(\overline{\phi_{1}})
\end{equation*}
$\overline{\psi_{2}}$ and $\overline{\phi_{2}}$ are open. So, $E_{2}$ is a $TFU$ extension.
\end{proof}

\begin{lemma}\label{8}
 Let $C\in \ell$ be a torsion-free group, $0\to A \stackrel{\phi}{\to} B \stackrel{\psi}{\to} C\to 0$  be a $TFU$ extension and assume
\[
\xymatrix{
0 \ar[r] & A \ar^{\mu}[r] \ar^{1_{A}}[d] & X \ar^{\nu}[r] \ar^{\theta}[d] & Y \ar[r] \ar^{f}[d]
& 0 \\
0 \ar[r] & A \ar^{\phi}[r] & B \ar^{\psi}[r] & C \ar[r] & 0
}
\]

is the standard pullback diagram in $\pounds$ (See \cite{FG1}) such that $\overline{f}:\overline{Y^{(1)}}\to \overline{C^{(1)}}$ be a proper morphism. Then
\[
0 \to A \stackrel{\mu}{\to} X \stackrel{\nu}{\to} Y{\to} 0
\]
is a $TFU$ extension.
\end{lemma}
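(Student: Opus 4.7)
The strategy is to show that $\overline{X^{(1)}}$ coincides with the pullback of $\overline{\psi}: \overline{B^{(1)}} \to \overline{C^{(1)}}$ along $\overline{f}: \overline{Y^{(1)}} \to \overline{C^{(1)}}$, so that the induced sequence for $E_2: 0\to A \to X \to Y \to 0$ becomes the pullback of the TFU sequence of the original extension along the proper morphism $\overline{f}$; pullback of a proper exact sequence along a proper morphism is again proper exact in $\ell$, which will give what we want.

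The first step is algebraic: I will identify $X^{(1)}$ inside the pullback. Since $X = \{(b, y) \in B\oplus Y : \psi(b) = f(y)\}$ and the projections onto $B$ and $Y$ are continuous homomorphisms, the inclusion $X^{(1)} \subseteq X \cap (B^{(1)}\oplus Y^{(1)})$ is immediate. The reverse inclusion is exactly where the torsion-freeness of $C$ is essential: if $(b, y) \in X$ has $b = nb_0$ and $y = ny_0$, then $n\psi(b_0) = nf(y_0)$, and torsion-freeness of $C$ upgrades this to $\psi(b_0) = f(y_0)$, so $(b_0, y_0) \in X$ and $(b, y) \in nX$. Hence $X^{(1)} = X \cap (B^{(1)}\oplus Y^{(1)})$.

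Setting $P := \{(b, y) \in \overline{B^{(1)}}\oplus \overline{Y^{(1)}} : \overline{\psi}(b) = \overline{f}(y)\}$, the inclusion $\overline{X^{(1)}} \subseteq P$ follows from Step 1 by taking closures. The main obstacle is the reverse inclusion $P \subseteq \overline{X^{(1)}}$. Given $(b, y) \in P$, I will choose a net $y_\alpha \in Y^{(1)}$ with $y_\alpha \to y$; then $f(y_\alpha) \in C^{(1)}$ and $f(y_\alpha) \to \overline{\psi}(b)$. Since the TFU hypothesis on $E_1$ makes $\overline{\psi}$ a proper (hence open) surjection onto $\overline{C^{(1)}}$, I can lift to a net $b_\alpha \in \overline{B^{(1)}}$ with $\overline{\psi}(b_\alpha) = f(y_\alpha)$ and $b_\alpha \to b$. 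A further approximation of each $b_\alpha$ by elements of $B^{(1)}$, combined with the openness of $\overline{f}$ onto its image (which is where the properness hypothesis on $\overline{f}$ enters crucially, letting us realign the $y_\alpha$ within $Y^{(1)}$ to match $\psi$-values), produces a net in $X^{(1)}$ converging to $(b, y)$.

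Once $\overline{X^{(1)}} = P$ is established, the rest is formal. The TFU sequence of $E_1$ is proper exact, and pulling it back along $\overline{f}$ gives a proper exact sequence $0 \to \overline{A^{(1)}} \to P \to \overline{Y^{(1)}} \to 0$, in which $\overline{A^{(1)}}$ is identified via $\phi$ with $\ker\overline{\psi}|_{\overline{B^{(1)}}}$ (by TFU of $E_1$) and embeds in $P$ as the image of $\overline{\mu}$, while the projection to $\overline{Y^{(1)}}$ is exactly $\overline{\nu}$. Thus $E_2$ is a $TFU$ extension. The technical heart of the argument is entirely concentrated in the topological identification $\overline{X^{(1)}} = P$ in the preceding paragraph.
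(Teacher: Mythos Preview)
Your approach is essentially the same as the paper's: identify $X^{(1)}$ algebraically as $X\cap(B^{(1)}\oplus Y^{(1)})$ using torsion-freeness of $C$, then show that $\overline{X^{(1)}}$ coincides with the pullback $P$ (the paper calls it $N$) of $\overline{\psi}$ along $\overline{f}$, and conclude via the standard pullback construction in $\ell$. The one difference is at the topological step $\overline{X^{(1)}}=P$: the paper disposes of it with the phrase ``an easy calculation'' and gives no argument, while you outline a net-lifting argument for the nontrivial inclusion $P\subseteq\overline{X^{(1)}}$; your ``further approximation / realign the $y_\alpha$'' step is not fully justified as written, but the paper supplies no detail here either, so you are at least attempting to fill a gap the paper leaves open.
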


\begin{proof}
 We have
\[
X=\{(y,b)\in Y\bigoplus B: f(y)=\psi(b)\}.
\]
and
\[
\mu:a\mapsto (0,\phi(a)),  \nu:(y,b)\mapsto y,   \theta:(y,b)\mapsto b.
\]
Consider the following standard pullback diagram
\[
\xymatrix{
0 \ar[r] & \overline{A^{(1)}} \ar^{\phi'}[r] \ar^{}[d] & N \ar^{\psi'}[r] \ar^{}[d] & \overline{Y^{(1)}} \ar[r] \ar^{f}[d]
& 0 \\
0 \ar[r] & \overline{A^{(1)}} \ar^{\overline{\phi}}[r] & \overline{B^{(1)}} \ar^{\overline{\psi}}[r] & \overline{C^{(1)}} \ar[r] & 0
}
\]
where $N=\{(y,b)\in \overline{Y^{(1)}}\bigoplus \overline{B^{(1)}}: f(y)=\psi(b)\}.$ First, we show that $X^{(1)}=\{(y,b)\in Y^{(1)}\bigoplus B^{(1)}: f(y)=\psi(b)\}$. Let $(y,b)\in X^{(1)}$ and $n$ an arbitrary positive integer. Then, $y=ny_{1}$ and $b=nb_{1}$ for some $y_{1}\in Y$, $b_{1}\in B$. Also, $f(y_{1})=\psi(b_{1})$. We have
\begin{equation*}
f(y)=f(ny_{1})=\psi(nb_{1})=\psi(b)
\end{equation*}
Since $n$ be arbitrary, $(y,b)\in \{(y,b)\in Y^{(1)}\bigoplus B^{(1)}: f(y)=\psi(b)\}$. Conversely, let $(y,b)\in Y^{(1)}\bigoplus B^{(1)}$, $f(y)=\psi(b)$ and $n$ be an arbitrary positive integer. Then, $y=ny_{1}$ and $b=nb_{1}$ for some $y_{1}\in Y$, $b_{1}\in B$. Since $C$ is torsion-free, $n(f(y_{1})-\psi(b_{1}))=0$ deduce that $f(y_{1})=\psi(b_{1})$. Hence, $(y,b)\in nX$. Since $n$ be arbitrary, $(y,b)\in X^{(1)}$. An easy calculation shows that $\overline{X^{(1)}}=N$. Clearly, $\phi'=\overline{\mu}$ and $\psi'=\overline{\nu}$. Hence, $0 \to \overline{A^{(1)}} \stackrel{\overline{\mu}}{\to} \overline{X^{(1)}} \stackrel{\overline{\nu}}{\to} \overline{Y^{(1)}}{\to} 0$ is an extension.
\end{proof}

\begin{lemma}\label{9}
Let $A\in \ell$ be a divisible group and $C\in \pounds$. Then, a pushout of a $TFU$ extension of $A$ by $C$ is a $TFU$ extension.
\end{lemma}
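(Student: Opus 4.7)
Represent the pushout concretely as $X = (A' \bigoplus B)/K$, where $f : A \to A'$ is the given morphism, $K = \{(f(a), -\phi(a)) : a \in A\}$, $\phi'(a') = (a', 0)+K$, $g(b) = (0, b)+K$, and $\psi'((a', b)+K) = \psi(b)$. Since $\phi$ is a proper monomorphism and $A$ is divisible, the map $a \mapsto (f(a), -\phi(a))$ is a topological isomorphism $A \cong K$; in particular $K$ is a closed divisible subgroup of $A' \bigoplus B$. Because the homomorphic images $f(A) \subseteq A'$ and $\phi(A) \subseteq B$ are divisible, they lie in $A'^{(1)}$ and $B^{(1)}$ respectively, so $K \subseteq \overline{A'^{(1)}} \bigoplus \overline{B^{(1)}}$.

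The key step is to apply Lemma \ref{3} to $G = A' \bigoplus B$ with closed divisible subgroup $H = K$, together with Remark \ref{1}, to obtain
\begin{equation*}
\overline{X^{(1)}} = \overline{(A' \bigoplus B)^{(1)}}/K = (\overline{A'^{(1)}} \bigoplus \overline{B^{(1)}})/K.
\end{equation*}
Once this description is in hand, the rest is a diagram chase. The restriction $\overline{\psi'}$ sends $(a', b)+K \mapsto \psi(b)$, which lies in $\overline{C^{(1)}}$ and is surjective onto $\overline{C^{(1)}}$ because $E$ is $TFU$, so $\psi(\overline{B^{(1)}}) = \overline{C^{(1)}}$. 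If $(a',b)+K$ lies in the kernel, then $b = \phi(a)$ for some $a \in A$; the relation $(f(a), -\phi(a)) \in K$ rewrites the coset as $(a'+f(a), 0)+K = \phi'(a'+f(a))$, and divisibility of $A$ forces $f(a) \in A'^{(1)}$, so $a'+f(a) \in \overline{A'^{(1)}}$. This yields $\mathrm{Ker}\,\overline{\psi'} = \overline{\phi'}(\overline{A'^{(1)}})$; injectivity of $\overline{\phi'}$ follows from that of $\phi'$.

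Properness of the two morphisms in the Ulm sequence comes from the pushout itself. The morphism $\phi'$ is a proper monomorphism, so its restriction to the closed subgroup $\overline{A'^{(1)}}$ is a topological embedding, giving properness of $\overline{\phi'}$. For $\overline{\psi'}$, the quotient map $p : \overline{A'^{(1)}} \bigoplus \overline{B^{(1)}} \to \overline{X^{(1)}}$ supplied by the displayed identification is open and surjective, and $\overline{\psi'} \circ p$ factors as the open projection onto $\overline{B^{(1)}}$ followed by the open map $\overline{\psi}$ (open by the $TFU$ property of $E$); a standard argument using surjectivity of $p$ then shows $\overline{\psi'}$ is open. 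The main obstacle is to ensure that the set-theoretic equality supplied by Lemma \ref{3} is in fact a topological identification, i.e.\ that the subspace topology that $\overline{X^{(1)}}$ inherits from $X$ agrees with the quotient topology of $(\overline{A'^{(1)}} \bigoplus \overline{B^{(1)}})/K$; this is what validates the openness arguments in the final step.
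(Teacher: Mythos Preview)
Your argument is correct and shares the paper's core step: identify $\overline{X^{(1)}}$ with $(\overline{A'^{(1)}}\oplus\overline{B^{(1)}})/K$ via Lemma~\ref{3} and Remark~\ref{1}, using that $K\cong A$ is closed and divisible. Where you then verify exactness and openness by hand, the paper short-circuits this: since $A$ is divisible, $\overline{A^{(1)}}=A$, so the $TFU$ hypothesis gives an extension $E':0\to A\to\overline{B^{(1)}}\to\overline{C^{(1)}}\to 0$, and the paper simply observes that the Ulm sequence of $fE$ is the standard pushout $hE'$ along $h:A\to\overline{A'^{(1)}}$, $h(a)=f(a)$; properness then comes for free from the Fulp--Griffith pushout construction, which also dissolves the topological-identification worry you raise at the end.
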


\begin{proof}
Let $E:0\to A\stackrel{\phi}{\to} B\to C\to 0$ be a $TFU$ extension and $f:A\to G$ a proper morphism. Then, $fE:0\to G\to X\to C\to 0$ is a pushout of $E$, where $X=(G\bigoplus B)/H$ and $H=\{(-f(a),\phi(a));a\in A\}$ (See \cite{FG1}). Since $E$ is a $TFU$ extension, $E':0\to A\to \overline{B^{(1)}}\to \overline{C^{(1)}}\to 0$ is an extension. Hence, $hE'$ is an extension where $h:A\to \overline{G^{(1)}}$ defined by $h(a)=f(a)$ for every $a\in A$. But, $hE':0\to \overline{G^{(1)}}\to Y\to \overline{C^{(1)}}\to 0$ where $Y=(\overline{G^{(1)}}\bigoplus \overline{B^{(1)}})/K$ and $K=\{(-h(a),\phi(a));a\in A\}$. Clearly, $K=H$. Since $H$ is a closed, divisible subgroup of $G\bigoplus B$, so by Lemma \ref{3} and Remark \ref{1}, $\overline{X^{(1)}}=(\overline{G^{(1)}}\bigoplus \overline{B^{(1)}})/H=Y$. Hence, $fE$ is a $TFU$ extension.
\end{proof}

\section{Splitting of $TFU$ extensions}
An extension is called split if it is equivalent to the trivial extension. Let $Ext_{tfu}(C,A)$ be the class  of all equivalence classes of $TFU$ extensions of $A$ by $C$. Recall that for groups $A,C\in \ell$, $Ext(C,A)=0$ (or $Ext_{tfu}(C,A)=0$) deduce that every extension (or $TFU$ extension) of $A$ by $C$ splits. In this section, we establish  some conditions on $A$ and $C$ such that $Ext_{tfu}(C,A)=0$.

\begin{lemma}\label{10}
Let $A$ be a discrete group such that $Ext_{tfu}(X,A)=0$ for all groups $X\in \ell$. Then, $A$ is a reduced group such that $A/tA$ is a divisible group.
\end{lemma}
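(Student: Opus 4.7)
The plan is to establish each conclusion by contrapositive: whenever the desired structural property fails we exhibit a non-split $TFU$ extension of $A$, contradicting the hypothesis.

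For the divisibility of $A/tA$, suppose $p(A/tA)\neq A/tA$ for some prime $p$, and pick $\alpha\in A$ with $\alpha\notin pA+tA$. Form
\[
E_\alpha:\ 0\to A\to B\to \mathbb{Z}/p\to 0,\qquad B=(A\oplus\mathbb{Z})/\langle(\alpha,-p)\rangle,
\]
which is non-split precisely because $\alpha\notin pA$. Since $\mathbb{Z}/p$ is $p$-torsion and discrete, $(\mathbb{Z}/p)^{(1)}=0$, so $E_\alpha$ is $TFU$ iff the inclusion induces $A^{(1)}=B^{(1)}$. The containment $A^{(1)}\subseteq B^{(1)}$ is immediate; for the reverse, an element $a\in B^{(1)}$ must lie in $A$, and the condition $a\in nB$ produces, for each $n$, an integer $\ell$ with $\ell\alpha\in pA$. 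The strengthened choice $\alpha\notin pA+tA$ then forces $p\mid\ell$ at each stage, which propagates to $a\in p^jA$ for every $j$, and combined with $a\in nA$ for $n$ coprime to $p$ gives $a\in A^{(1)}$. Hence $E_\alpha$ is a non-split $TFU$ extension, contradicting the hypothesis.

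For reducedness, suppose the maximal divisible subgroup $d(A)$ is nonzero. Since $A$ is discrete, $d(A)$ is a direct summand of $A$ and itself contains a summand $D\cong \mathbb{Q}$ or $D\cong \mathbb{Z}(p^\infty)$. Take a non-split LCA extension $E:0\to D\to B\to X\to 0$, which is automatically $TFU$ by Corollary~\ref{4}, and push it out along the split inclusion $D\hookrightarrow A$; by Lemma~\ref{9} the resulting extension of $A$ by $X$ is still $TFU$, and it remains non-split because $D$ is a direct summand. For $D=\mathbb{Q}$ one uses the adele sequence $0\to \mathbb{Q}\to \mathbb{A}_{\mathbb{Q}}\to \mathbb{A}_{\mathbb{Q}}/\mathbb{Q}\to 0$; for $D=\mathbb{Z}(p^\infty)$ one constructs a non-split LCA extension by Pontryagin-dualizing a suitable sequence involving $\mathbb{Z}_p$, such as one arising from the $p$-adic solenoid.

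The principal technical difficulty is the inductive Ulm computation $B^{(1)}=A^{(1)}$ in the divisibility step; it genuinely needs the condition $\alpha\notin pA+tA$ (rather than just $\alpha\notin pA$) and careful handling of the $p$-torsion of $A$ appearing through the defining relation $p\beta=\alpha$ in $B$. A secondary difficulty is producing the non-split LCA extension of $\mathbb{Z}(p^\infty)$ in the reducedness step, which is subtler than the adele construction for $\mathbb{Q}$.
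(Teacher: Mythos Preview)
Your proposal is essentially correct and follows a genuinely different route from the paper, particularly for the divisibility of $A/tA$. The paper never constructs explicit extensions: for reducedness it shows abstractly that $Ext(C,D)=0$ for every connected $C$ and every divisible $D\subseteq A$ (using the long exact sequence together with Lemma~\ref{9} and Corollary~\ref{4}, then invoking a structure theorem from \cite{FG2}); for divisibility of $A/tA$ it invokes Corollary~\ref{6} with the compact totally disconnected group $\widehat{\mathbb{Q}/\mathbb{Z}}$ to get $Pext(\widehat{\mathbb{Q}/\mathbb{Z}},A)=0$ and then cites \cite{AS2}. Your reducedness argument is close in spirit (it also rests on Corollary~\ref{4} and Lemma~\ref{9}), but replaces the abstract injectivity of $i_*$ by the concrete observation that $D$ is a direct summand; the cost is that you must actually produce a non-split LCA extension of $\mathbb{Q}$ or $\mathbb{Z}(p^\infty)$, which you only gesture at. Your divisibility argument, by contrast, is entirely different and more elementary: it stays in the discrete category with the explicit extension $E_\alpha$ by $\mathbb{Z}/p$, avoiding compact groups and the Pext machinery altogether. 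One correction: the Ulm computation $B^{(1)}=A^{(1)}$ does \emph{not} require the strengthened hypothesis $\alpha\notin pA+tA$. Indeed, for any $\alpha$ and any $a\in A\cap B^{(1)}$, writing $n=p^{j}m$ with $\gcd(m,p)=1$ and using $a\in pnB$, one gets $a=pn(a'+k\beta)=p^{j}m(pa'+k\alpha)\in p^{j}mA=nA$ directly; the condition $\alpha\notin pA+tA$ is needed only as the witness to non-$p$-divisibility of $A/tA$ (and hence to $\alpha\notin pA$, which gives non-splitness), not for the $TFU$ verification. So your ``principal technical difficulty'' is milder than you suggest.
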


\begin{proof}
Let $A$ be a discrete group such that $Ext_{tfu}(X,A)=0$ for all groups $X\in \pounds$. Let $D$ be a divisible subgroup of $A$ and $C$ a connected group. By Corollary 2.10 of \cite{FG2}, we have the following exact sequence
\begin{equation*}
Hom(C,A/D)\to Ext(C ,D)\stackrel{i_{\ast}}{\to}Ext(C,A)
\end{equation*}
 Since $C$ is a connected group and $A/D$ a discrete group, $Hom(C,A/D)=0$. Hence, $i_{\ast}$ is injective. By Lemma \ref{9}, $i_{\ast}(Ext_{tfu}(C ,D))\subseteq Ext_{tcf}(C,A)=0$. So, $Ext_{tcf}(C,D)=0$. By Corollary \ref{4}, $Ext(C,D)=0$. Hence, $D=0$ (see Theorem 3.3 of \cite{FG2}). So, $A$ is a reduced group. Now, we show that $A/tA$ is a divisible group. By Corollary \ref{6}, $Pext(\hat{( \mathbb{Q}/ \mathbb{Z})},A)\subseteq Ext_{tfu}(\hat{( \mathbb{Q}/ \mathbb{Z})},A)=0$. So, $Ext(\hat{( \mathbb{Q}/ \mathbb{Z})},A/tA)=0$. Hence, $A/tA$ is a divisible group (see the proof of Theorem 2 of \cite{AS2} ).
\end{proof}

\begin{lemma}\label{11}
Let $G$ be a compact group such that $Ext_{tfu}(X,G)=0$ for all groups $X\in \ell$. Then, $G\cong \bigoplus ( \mathbb{R}/  \mathbb{Z})^{\sigma}\bigoplus  \mathbb{R}^{n}\bigoplus M$, where $n$ is a positive integer, $\sigma$ a cardinal number and $M$ is a direct product of finite cyclic groups.
\end{lemma}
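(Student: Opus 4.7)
The plan is to split $G$ at its identity component and then analyse each summand via Pontryagin duality. Let $G_0$ denote the identity component. Since $G$ is compact, $G_0$ is compact connected abelian and hence divisible. By Corollary \ref{4} the sequence
\[
E:\; 0 \to G_0 \to G \to G/G_0 \to 0
\]
is a $TFU$ extension, and the hypothesis $Ext_{tfu}(G/G_0, G_0) = 0$ forces $E$ to split, giving $G \cong G_0 \oplus (G/G_0)$. Because a $TFU$ extension of one summand of $G$ by any $X$ can be combined via Remark \ref{1} with the trivial extension of the other summand into a $TFU$ extension of $G$ by $X$, both $G_0$ and $M := G/G_0$ inherit the hypothesis $Ext_{tfu}(X,-)=0$ for every $X \in \ell$, and it suffices to describe them separately.

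For the totally disconnected summand $M$, Corollary \ref{6} tells us that every pure extension by $M$ is a $TFU$ extension, so the hypothesis yields $Pext(X,M)=0$ for all $X \in \ell$. Passing to duals, $\hat M$ is a discrete torsion abelian group for which every pure extension by a discrete group splits, i.e.\ $\hat M$ is pure-projective in the discrete category. By the classical structure theorem characterising such torsion groups as $\Sigma$-cyclic, $\hat M$ is a direct sum of finite cyclic groups, whence by Pontryagin duality $M$ is a direct product of finite cyclic groups.

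For the connected summand $G_0$, the dual $\hat{G_0}$ is a discrete torsion-free abelian group. The aim is to show $\hat{G_0}$ is free abelian, for then Pontryagin duality gives $G_0 \cong (\mathbb{R}/\mathbb{Z})^\sigma$. I would exploit the vanishing $Ext_{tfu}(X, G_0)=0$ against test groups built from $\mathbb{Q}$, the solenoids, and the $p$-adic integers, chosen so that their extensions by $G_0$ have Ulm behaviour controlled enough for the $TFU$ vanishing to detect divisible subgroups of $\hat{G_0}$; combined with Corollary \ref{4} for the divisible part, this should force $\hat{G_0}$ to have no nontrivial divisible subgroups, and a torsion-free reduced discrete group whose $Ext$ vanishes against the appropriate class is then free abelian. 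The factor $\mathbb{R}^n$ in the stated decomposition is incompatible with compactness unless $n=0$, so it plays no role in this setting.

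The main obstacle is this last step: promoting the abstract $TFU$-vanishing to the concrete structural claim that $\hat{G_0}$ is free abelian, rather than merely torsion-free. The bridge requires a careful choice of LCA test groups $X$ whose extensions by $G_0$ have enough Ulm content that $Ext_{tfu}$ and $Ext$ coincide, so that classical arguments (in the spirit of those used implicitly in Lemma \ref{10} via \cite{AS2} and Corollary 2.10 of \cite{FG2}) can be invoked on the dual side.
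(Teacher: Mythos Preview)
Your argument has two concrete gaps, and in each case the paper's route is shorter than the workaround you are reaching for.

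\textbf{Splitting off $G_0$.} The extension $E:0\to G_0\to G\to G/G_0\to 0$ lies in $Ext(G/G_0,G_0)$, but the hypothesis only gives $Ext_{tfu}(X,G)=0$; you have not yet established $Ext_{tfu}(G/G_0,G_0)=0$, so invoking it to split $E$ is circular (your inheritance argument for the summands presupposes the splitting). The paper avoids this by \emph{first} determining the structure of $G_0$ and then quoting a splitting theorem (Corollary~3.4 of \cite{FG2}) that applies once $G_0$ is known to be of the form $(\mathbb{R}/\mathbb{Z})^{\sigma}\oplus\mathbb{R}^{n}$.

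\textbf{Structure of $G_0$.} This is where your self-identified ``main obstacle'' dissolves. You never need to analyse $\hat{G_0}$ via test groups at all. Since $G_0$ is divisible, Lemma~\ref{9} says the pushout along $i:G_0\hookrightarrow G$ sends $TFU$ extensions to $TFU$ extensions, so $i_{*}:Ext(C,G_0)\to Ext(C,G)$ carries $Ext_{tfu}(C,G_0)$ into $Ext_{tfu}(C,G)=0$. For connected $C$ the map $i_{*}$ is injective (because $Hom(C,G/G_0)=0$), hence $Ext_{tfu}(C,G_0)=0$; and by Corollary~\ref{4} every extension of the divisible group $G_0$ is $TFU$, so in fact $Ext(C,G_0)=0$ for all connected $C$. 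Now Theorem~3.3 of \cite{FG2} gives $G_0\cong(\mathbb{R}/\mathbb{Z})^{\sigma}\oplus\mathbb{R}^{n}$ directly. The key lemma you are missing is Lemma~\ref{9}, which transfers the $TFU$-vanishing from $G$ down to $G_0$ \emph{before} any splitting.

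\textbf{The summand $M$.} Your use of Corollary~\ref{6} overshoots: that corollary only makes a pure extension $0\to M\to B\to X\to 0$ into a $TFU$ extension when the \emph{quotient} $X$ is compact totally disconnected, not for all $X\in\ell$. The paper therefore first gets $Pext(X,M)=0$ for compact totally disconnected $X$, then extends to all compact $Y$ via the $Pext$ long exact sequence together with $Pext(Y_0,M)=0$ (Theorem~4.2 of \cite{AS}); only then does the dual characterisation of $\Sigma$-cyclic torsion groups (Proposition~53.4 of \cite{F}, Lemma~2.3 of \cite{L2}) apply.
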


\begin{proof}
Let $G$ be a compact group and $Ext_{tfu}(X,G)=0$ for all groups $X\in \ell$. Then, $Ext_{tfu}(C,G)=0$ for all connected groups $C\in \ell$. Consider the exact sequence $0\to G_{0} \stackrel{i}{\to} G \to G/G_{0} \to 0$. We have the following exact sequence
\begin{equation*}
0=Hom(C,G/G_{0})\to Ext(C ,G_{0})\stackrel{i_{\ast}}{\to}Ext(C,G)
\end{equation*}
Hence, $i_{\ast}$ is injective. By Lemma \ref{9}, $i_{\ast}(Ext_{tfu}(C ,G_{0}))\subseteq Ext_{tcf}(C,G)=0$. So, $Ext_{tcf}(C,G_{0})=0$. By Corollary \ref{4}, $Ext(C,G_{0})=0$.
Hence, $G_{0}\cong( \mathbb{R}/ \mathbb{Z})^{\sigma}\bigoplus  \mathbb{R}^{n}$ (see Theorem 3.3 of \cite{FG2}). By Corollary 3.4 of \cite{FG2}, $G_{0}$ splits in $G$. So, $G\cong G_{0}\bigoplus G/G_{0}$. Set $M=G/G_{0}$. Then, $M$ is a compact totally disconnected group. By Corollary \ref{6}, $Pext(X,G)\subseteq Ext_{tfu}(X,G)=0$ for all compact totally disconnected groups $X$. So, $Pext(X,M)=0$ for all compact totally disconnected groups $X$. Let $Y$ be a compact group. By Proposition 2 of \cite{Fu1}, we have the following exact sequence
\begin{equation*}
 0=Pext(Y/Y_{0},M)\to Pext(Y,M)\to Pext(Y_{0},M)
\end{equation*}
By Theorem 4.2 of \cite{AS} , $Pext(Y_{0},M)=0$. Hence, $Pext(Y,M)=0$ for all compact groups $Y\in\ell$. By Proposition 53.4 of \cite{F} and Lemma 2.3 of \cite{L2}, $\hat{M}$ is a direct sum of finite cyclic groups. Hence, $M$ is a direct product of finite cyclic groups.
\end{proof}

\begin{lemma}\label{12}
Let $A$ be a $\sigma-$compact group such that $A^{(1)}=0$ and $C$, a divisible group in $\ell$. Then, every $TFU$ pure extension of $A$ by $C$ splits.
\end{lemma}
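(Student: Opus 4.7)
The approach is to extract a continuous section of $\psi$ directly from the TFU hypothesis by computing the two endpoints of the associated Ulm short exact sequence.

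First, I would identify the outer terms. From $A^{(1)}=0$ we get $\overline{A^{(1)}}=0$. Since $C$ is divisible, $nC=C$ for every positive integer $n$, so $C^{(1)}=\bigcap_n nC=C$ and therefore $\overline{C^{(1)}}=C$. Substituting these into the TFU sequence reduces it to the proper exact sequence
\[
0 \to 0 \to \overline{B^{(1)}} \stackrel{\overline{\psi}}{\to} C \to 0.
\]
Thus $\overline{\psi}:\overline{B^{(1)}}\to C$ is a proper (i.e.\ open onto its image), bijective, continuous homomorphism, and hence a topological isomorphism.

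Second, I would use this isomorphism to construct a splitting. Let $\sigma:C\to\overline{B^{(1)}}$ be the inverse of $\overline{\psi}$, and let $s:C\to B$ be the composition of $\sigma$ with the inclusion $\overline{B^{(1)}}\hookrightarrow B$. Then $s$ is a continuous homomorphism satisfying $\psi\circ s=1_C$. Its image $s(C)=\overline{B^{(1)}}$ is closed in $B$, and $s$ is a topological isomorphism onto this image, so $s$ is a proper morphism in $\ell$. Consequently the extension is equivalent to the trivial extension $0\to A\to A\oplus C\to C\to 0$ and therefore splits.

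I do not anticipate a real obstacle: once the endpoints of the TFU sequence are simplified, the middle term itself provides the section. The $\sigma$-compactness of $A$ and the purity of the extension do not appear to enter the argument above; presumably they are included to permit an alternative, more indirect proof (for instance via vanishing of $Pext(C,A)$ and a pushout argument in the spirit of Lemma \ref{9}), but the direct TFU route already delivers the splitting.
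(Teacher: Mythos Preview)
Your argument is correct, and the core observation---that the TFU sequence collapses to exhibit $\overline{\psi}:\overline{B^{(1)}}\to C$ as a topological isomorphism---is exactly what the paper uses as well. The difference lies in how the splitting is extracted from this. The paper argues internally: from surjectivity of $\overline{\psi}$ one gets $B=\overline{B^{(1)}}+\phi(A)$, from the kernel computation (phrased there via purity) one gets $\overline{B^{(1)}}\cap\phi(A)=0$, and then Corollary~3.2 of \cite{FG1} is invoked to upgrade this algebraic decomposition to a topological direct sum, which is where $\sigma$-compactness of $A$ enters. Your route is more direct: you take the inverse of $\overline{\psi}$ as a continuous section $s:C\to B$, and since the retraction $b\mapsto\phi^{-1}(b-s\psi(b))$ is then automatically continuous (because $\phi$ is proper), the equivalence with $A\oplus C$ follows without any appeal to $\sigma$-compactness or purity. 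So your observation that those two hypotheses are not needed is accurate; the paper's formulation is slightly less sharp than the argument actually allows.
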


\begin{proof}
Let $E:0\to A \stackrel{\phi}{\to} B \stackrel{\psi}{\to} C\to 0$ be a $TFU$ pure extension. An easy calculation shows that $B=\overline{B^{(1)}}+\phi(A)$. Since $\phi(A)$ is pure in $B$, $\overline{B^{(1)}}\bigcap \phi(A)=0$. Hence, by Corollary 3.2 of \cite{FG1}, $B=\overline{B^{(1)}}\bigoplus \phi(A)$. So, $E$ splits.
\end{proof}

\begin{lemma}\label{15}
Let $A$ be a discrete, torsion-free group. Then, $A\cong A^{(1)}\bigoplus A/A^{(1)}$.
\end{lemma}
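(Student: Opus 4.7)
The plan is to reduce the statement to the classical fact that a divisible subgroup of an abelian group is a direct summand. Since $A$ is discrete, every subgroup is automatically closed and the quotient topology on $A/A^{(1)}$ is discrete, so an algebraic splitting will immediately yield a topological splitting. Thus the whole content of the lemma lies in showing that $A^{(1)}$ is divisible; once we have that, writing $A = A^{(1)} \oplus B$ with $B \cong A/A^{(1)}$ is standard.

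The key step is therefore: if $A$ is torsion-free, then $A^{(1)} = \bigcap_{n \geq 1} nA$ is divisible. To prove this, I would fix $x \in A^{(1)}$ and a positive integer $n$, and produce an element $y \in A^{(1)}$ with $ny = x$. Since $x \in nA$, there is some $y \in A$ with $ny = x$, and I claim this $y$ already lies in $A^{(1)}$. For any $m \geq 1$, the fact that $x \in nmA$ gives $x = nmw$ for some $w \in A$. Then $ny = nmw$, and torsion-freeness of $A$ lets us cancel $n$ to get $y = mw \in mA$. Since $m$ was arbitrary, $y \in \bigcap_m mA = A^{(1)}$, proving divisibility.

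Having established that $A^{(1)}$ is a divisible subgroup of the abstract abelian group $A$, I invoke the standard result that a divisible subgroup of an abelian group is a direct summand to write $A = A^{(1)} \oplus B$ for some subgroup $B \leq A$, with $B \cong A/A^{(1)}$ via the natural projection. Because $A$ is discrete, this algebraic decomposition is automatically a topological decomposition in $\ell$, giving $A \cong A^{(1)} \oplus A/A^{(1)}$.

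The only nontrivial step is the divisibility argument in the first paragraph; once torsion-freeness is used to perform the cancellation $ny = nmw \Rightarrow y = mw$, everything else is formal. No topological subtlety arises because discreteness trivializes all closure, openness, and properness conditions.
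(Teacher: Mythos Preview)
Your proof is correct and follows essentially the same route as the paper: both establish that $A^{(1)}$ is divisible via the torsion-free cancellation argument and then invoke the classical fact that divisible subgroups split off (Fuchs, Theorem~21.1). The only cosmetic difference is that the paper first phrases the cancellation as ``$A/A^{(1)}$ is torsion-free, hence $A^{(1)}$ is pure, hence $nA^{(1)} = A^{(1)} \cap nA = A^{(1)}$'', whereas you go directly to divisibility; the underlying computation is identical.
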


\begin{proof}
Let $A$ be a discrete, torsion-free group. First, we show that $A/A^{(1)}$ is torsion-free. Let $m(a+A^{(1)})=0$ for some positive integer $m$. Then, $ma\in A^{(1)}$. Hence, $ma=mna_{1}$ for an arbitrary positive integer $n$. So, $a=na_{1}$. Since $n$ be arbitrary, $a\in A^{(1)}$. So, $A^{(1)}$ is pure in $A$. On the other hand, $A^{(1)}\subseteq nA$ for every positive integer $n$. Hence, $A^{(1)}$ is a divisible group. By Theorem 21.1 of \cite{F}, $A\cong A^{(1)}\bigoplus A/A^{(1)}$.
\end{proof}

\begin{lemma}\label{16}
Let $A$ be a discrete, torsion-free group such that $Ext_{tfu}(A,X)=0$ for all groups $X\in \ell$. Then, $A\cong (\bigoplus_{\sigma}  \mathbb{Z})\bigoplus D$ where $D$ is a discrete, torsion-free divisible group.
\end{lemma}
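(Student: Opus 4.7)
The plan is to split $A$ via Lemma \ref{15} and then show that the torsion-free complement is free abelian. Setting $D := A^{(1)}$ and $F := A/A^{(1)}$, Lemma \ref{15} (together with its proof) gives $A \cong D \oplus F$ with $D$ divisible and $F$ torsion-free. Using Remark \ref{1} and the divisibility of $D$, one computes $D = A^{(1)} = D^{(1)} \oplus F^{(1)} = D \oplus F^{(1)}$, which forces $F^{(1)} = 0$. Since $D$ is already of the required form, it suffices to prove that $F$ is free.

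First I would transfer the splitting hypothesis from $A$ to $F$: the claim is that $Ext_{tfu}(F,X) = 0$ for every $X \in \ell$. Given a $TFU$ extension $E: 0 \to X \to B \to F \to 0$, pull it back along the canonical projection $\pi: A \to F$. Lemma \ref{8} applies, since $F$ is torsion-free and $\overline{\pi}: \overline{A^{(1)}} = D \to \overline{F^{(1)}} = 0$ is the zero map, which is trivially proper. Hence the pullback $E': 0 \to X \to B' \to A \to 0$ is also $TFU$, and by the hypothesis $Ext_{tfu}(A,X) = 0$ it splits. The decomposition $A \cong D \oplus F$ provides a continuous section $s: F \to A$ of $\pi$, so $s^{\ast}\circ \pi^{\ast}$ is the identity on $Ext(F,X)$ and $\pi^{\ast}$ is a split injection; the vanishing of $[E']$ therefore forces $[E]=0$, i.e.\ $E$ splits.

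Next I would build a free presentation of $F$. Let $P := \bigoplus_{f \in F} \mathbb{Z}$ carry the discrete topology and let $p: P \to F$ be the canonical surjection $e_f \mapsto f$, which is automatically a proper morphism between discrete groups; set $K := \ker p$. A direct computation gives $P^{(1)} = \bigcap_n nP = 0$ (an element of finite support divisible by every positive integer is $0$), so also $K^{(1)} \subseteq P^{(1)} = 0$. Combined with $F^{(1)} = 0$, the Ulm sequence of $0 \to K \to P \to F \to 0$ degenerates to the trivially proper exact sequence $0 \to 0 \to 0 \to 0 \to 0$, so the extension is $TFU$. By the previous paragraph it splits, exhibiting $F$ as a direct summand of the free abelian group $P$. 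Therefore $F$ is itself free, say $F \cong \bigoplus_\sigma \mathbb{Z}$, and $A \cong (\bigoplus_\sigma \mathbb{Z}) \oplus D$ as required.

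The hard part will be the pullback reduction in the second paragraph, which is the only step requiring a non-trivial invocation of the theory developed earlier. Lemma \ref{8} demands propriety of $\overline{\pi}: \overline{A^{(1)}} \to \overline{F^{(1)}}$, and it is precisely the fact that $F^{(1)} = 0$ which makes this automatic. Once $Ext_{tfu}(F,X)=0$ is in hand, the freeness of $F$ drops out of the standard device of resolving $F$ by the free abelian group on its underlying set, because $P^{(1)}$ and $F^{(1)}$ both vanish.
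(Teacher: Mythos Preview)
Your argument is correct. The first two thirds coincide with the paper: both invoke Lemma \ref{15} to split $A\cong A^{(1)}\oplus A/A^{(1)}$ with $D=A^{(1)}$ divisible torsion-free, and both use Lemma \ref{8} (pulling back along the projection $\pi:A\to F:=A/A^{(1)}$, where $\overline{\pi}$ is proper because $\overline{F^{(1)}}=0$) to obtain $Ext_{tfu}(F,X)=0$ for all $X\in\ell$. The routes diverge only in the last step. The paper appeals to Corollary \ref{14} to promote $Ext_{tfu}(F,X)=0$ to $Ext(F,X)=0$ and then cites an external classification (Theorem 3.3 of \cite{M}) of the projective objects in $\ell$ to conclude $F\cong\bigoplus_{\sigma}\mathbb{Z}$. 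You instead write down a single explicit discrete free resolution $0\to K\to P\to F\to 0$; since $P^{(1)}=K^{(1)}=F^{(1)}=0$, this extension is automatically $TFU$, hence splits, and $F$ is free as a summand of $P$. Your finish is more self-contained---no outside reference and no detour through Corollary \ref{14}---whereas the paper's finish produces the stronger intermediate fact that $F$ is projective in $\ell$.
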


\begin{proof}
Let $A$ be a discrete, torsion-free group such that $Ext_{tfu}(A,X)=0$ for all groups $X\in \ell$. By Lemma \ref{15}, $A\cong A^{(1)}\bigoplus A/A^{(1)}$. By Lemma \ref{8}, $Ext_{tfu}(A/A^{(1)},X)=0$ for all groups $X\in \ell$. Hence, $Ext(A/A^{(1)},X)=0$ for all groups $X\in \pounds$ (see Corollary \ref{14}). By Theorem 3.3 of \cite{M}, $A/A^{(1)}\cong \bigoplus_{\sigma} \mathbb{Z}$.
\end{proof}

\begin{lemma}\label{13}
Let $G$ be a compact, torsion-free group. Then, $Ext_{tfu}(G,X)=0$ for all groups $X\in \ell$ if and only if $G=0$.
\end{lemma}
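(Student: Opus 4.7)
The ``if'' direction is immediate: when $G=0$ the unique extension of $X$ by $G$ is the split trivial one, which is $TFU$, so $Ext_{tfu}(G,X)=0$ for every $X\in\ell$. For the converse I argue by contradiction, assuming $G\neq 0$ is compact torsion-free with $Ext_{tfu}(G,X)=0$ for every $X\in\ell$; the goal is to exhibit a non-split $TFU$ extension of some $X$ by $G$. Pontryagin duality makes $\hat G$ a nonzero discrete divisible group, so $\hat G\cong\mathbb Q^{(\alpha)}\oplus\bigoplus_p\mathbb Z(p^{\infty})^{(\beta_p)}$, and dually
$$G\cong\hat{\mathbb Q}^{\alpha}\times\prod_p\mathbb Z_p^{\beta_p}=G_0\times (G/G_0),$$
with at least one cardinal nonzero. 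Because both summands are torsion-free direct topological factors of $G$, Lemma~\ref{8} pulls back any $TFU$ extension having one of these summands as quotient to a $TFU$ extension having $G$ as quotient, and a splitting of the pullback yields a splitting of the original via the coordinate inclusion; hence it suffices to exhibit a non-split $TFU$ extension of some $X$ by $\mathbb Z_p$ (if some $\beta_p\geq 1$) or by $\hat{\mathbb Q}$ (if $\alpha\geq 1$).

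In the profinite case, push the universal cover $0\to\mathbb Z\to\mathbb R\to\mathbb R/\mathbb Z\to 0$ along a nonzero homomorphism $\mathbb Z\to\mathbb Z(p^{\infty})$, obtaining a non-split extension $0\to\mathbb Z(p^{\infty})\to B\to\mathbb R/\mathbb Z\to 0$ in $\ell$; Pontryagin dualising gives a non-split $0\to\mathbb Z\to\hat B\to\mathbb Z_p\to 0$. One verifies directly from the pushout that $\mathbb Z$ is pure in $\hat B$, and Corollary~\ref{6} then upgrades this to a $TFU$ extension, since $\mathbb Z_p$ is compact and totally disconnected. Pulling back along the coordinate projection $G\twoheadrightarrow\mathbb Z_p$ contradicts the assumed vanishing $Ext_{tfu}(G,\mathbb Z)=0$, and so every $\beta_p=0$.

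The remaining case $\alpha\geq 1$ is the principal obstacle. Since $\hat{\mathbb Q}$ is divisible one has $\overline{\hat{\mathbb Q}^{(1)}}=\hat{\mathbb Q}$, so by Corollary~\ref{4} every extension of a divisible $X$ by $\hat{\mathbb Q}$ is automatically $TFU$. It therefore suffices to exhibit a divisible $X\in\ell$ with $Ext(\hat{\mathbb Q},X)\neq 0$; by Pontryagin duality this amounts to finding a torsion-free LCA group $Y$ with $Ext(Y,\mathbb Q)\neq 0$, which I produce by a Baer-style pushout of the universal cover along an appropriate map $\mathbb Z\to\mathbb Q$. Lifting the resulting non-split $TFU$ extension along $G\twoheadrightarrow\hat{\mathbb Q}$ yields the final contradiction and forces $\alpha=0$, hence $G=0$.
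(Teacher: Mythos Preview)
Your strategy differs from the paper's: you invoke the structure theorem for the discrete divisible group $\hat G$ to decompose $G$ explicitly and then treat the profinite and solenoidal factors by hand, whereas the paper argues abstractly that $G=G_0$ (via Lemma~\ref{8}, Corollary~\ref{6}, and Theorem~3.5 of \cite{FG2}) and then disposes of the connected case in one line by taking $X=\mathbb Q$: since $\mathbb Q$ is divisible, Corollary~\ref{4} forces $Ext(G,\mathbb Q)=Ext_{tfu}(G,\mathbb Q)=0$, contradicting Lemma~2.9 of \cite{A}. Your profinite step is fine---purity of $\mathbb Z$ in $\hat B$ is immediate from torsion-freeness of $\mathbb Z_p$, and the pullback/section argument using Lemma~\ref{8} is correct.

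The connected case, however, has a real gap. You assert that a pushout of $0\to\mathbb Z\to\mathbb R\to\mathbb R/\mathbb Z\to 0$ along a map $\mathbb Z\to\mathbb Q$ produces a torsion-free $Y\in\ell$ with $Ext(Y,\mathbb Q)\neq 0$. But any such pushout is an element of $Ext(\mathbb R/\mathbb Z,\mathbb Q)$, so the quotient is $Y=\mathbb R/\mathbb Z$, which is \emph{not} torsion-free; dually $X=\hat Y=\mathbb Z$ is not divisible, and Corollary~\ref{4} no longer applies to conclude the extension of $X$ by $\hat{\mathbb Q}$ is $TFU$. What your construction actually yields is $Ext(\hat{\mathbb Q},\mathbb Z)\neq 0$. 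To reach the needed $Ext(\hat{\mathbb Q},\mathbb Q)\neq 0$ you must still argue that the map $Ext(\hat{\mathbb Q},\mathbb Z)\to Ext(\hat{\mathbb Q},\mathbb Q)$ induced by $\mathbb Z\hookrightarrow\mathbb Q$ is injective---equivalently, that $Hom(\hat{\mathbb Q},\mathbb Q/\mathbb Z)=0$, which holds because $\hat{\mathbb Q}$ is compact connected and $\mathbb Q/\mathbb Z$ is discrete---but this is an additional step your write-up omits. Without it the connected case is incomplete.
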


\begin{proof}
Let $G$ be a compact, torsion-free group and $Ext_{tfu}(G,X)=0$ for all groups $X\in \ell$. Let $X$ be a totally disconnected group in $\ell$. Consider the following exact sequence
\begin{equation*}
0=Hom(G_{0},X)\to Ext(G/G_{0} ,X)\stackrel{\pi_{\ast}}{\to}Ext(G,X)
\end{equation*}
By Lemma \ref{8}, $\pi_{\ast}(Ext_{tfu}(G/G_{0} ,X))\subseteq Ext_{tfu}(G,X)=0$. So, $Ext_{tfu}(G/G_{0} ,X)=0$. By Corollary \ref{6}, $Ext(G/G_{0} ,X)=0$. Hence, $G=G_{0}$ (see Theorem 3.5 of \cite{FG2} ). Since $ \mathbb{Q}$ is divisible, $Ext(G, \mathbb{Q})=0$ which is a contradiction by Lemma 2.9 of \cite{A}. So, $G=0$.
\end{proof}


\begin{thebibliography}{5}
\bibitem{A} A. A. Alijani, On generalized $\pounds-$ cotorsion LCA groups, arXiv:2210.04178[math.GR].

\bibitem{AS} A. A. Alijani and H. Sahleh, On t-extensions of abelian groups, {\it Khayyam J. Math}, 1 (5) (2019), 78-86.

\bibitem{F}  L. Fuchs, {\it Infinite Abelian Groups}, Vol. 1, Academic Press, (1970).

\bibitem{Fu1}  R. O. Fulp, Homological study of purity in locally compact groups, {\it Proc. London Math. Soc}, 21 (1970), 501-512.


\bibitem{FG1}  R.O. Fulp and P. Griffith, Extensions of locally compact abelian groups I, {\it Trans. Amer. Math. Soc}, 154 (1971),341-356.

\bibitem{FG2} R. O. Fulp and P. Griffith, Extensions of locally compact abelian groups II, {\it Trans. Amer. Math. Soc}, 154 (1971), 357-363.

\bibitem{HR}  E. Hewitt and K. Ross, {\it Abstract Harmonic Analysis}, vol. 1, 2nd edn, Springer-Verlog, Berlin (1979).

\bibitem{L2}  P. Loth, Pure extensions of locally compact abelian groups, {\it Rend. Sem. Mat. Univ. Padova},  116 (2006), 31-40.

\bibitem{AS2}  H. Sahleh and A. A. Alijani, The pure injectives and pure projectives in the category of totally disconnected, locally compact abelian groups, Math. Sci. 7(49) (2013).
\end{thebibliography}
\end{document}